\newcommand{\Hn}{ {\mathbb{H}^n} }
\newcommand{\Bn}{ {\mathbb{B}^n} }
\newcommand{\Rn}{ {\mathbb{R}^n} }
\theoremstyle{plain}
\newtheorem{lemma}[equation]{Lemma}
\begin{document}
\newtheorem{The}{Theorem}[section]

\numberwithin{equation}{section}

\title{Lipschitz constants for a hyperbolic type metric under M\"obius transformations}

\author{\|Yinping |Wu|, Hangzhou,
        \|Gendi |Wang*|, Hangzhou,
        \|Gaili |Jia|, Hangzhou,
        \|Xiaohui |Zhang|, Hangzhou
        }

\rec {August XX, 2023}


\abstract
  Let $D$ be a nonempty open set in a metric space $(X,d)$ with $\partial D\neq \emptyset$. Define
\begin{equation*}
h_{D,c}(x,y)=\log\left(1+c\frac{d(x,y)}{\sqrt{d_D(x)d_D(y)}}\right),
\end{equation*}
where $d_D(x)=d(x,\partial D)$ is the distance from $x$ to the boundary of $D$.
For every $c\geq 2$, $h_{D,c}$ is a metric.
In this paper, we study the sharp Lipschitz constants for the metric $h_{D,c}$ under M\"obius transformations of the unit ball, the upper half space, and  the punctured unit ball.
\endabstract

\keywords
  Lipschitz constant, hyperbolic type metric, M\"obius transformation
\endkeywords

\subjclass
51M10, 30C65
\endsubjclass


\section{Introduction}\label{intro}

In geometric function theory, numerous hyperbolic type metrics including the quasihyperbolic metric, distance ratio metric, and Apollonian metric, as extensions of the classical hyperbolic metric have been used and become standard tools \cite{bm,gh0,gh,go,gp,hkvb,V2}. Very recently, Mocanu \cite{mocanu} studied metric-preserving functions with respect to intrinsic metrics of hyperbolic type which yield new metrics. Conformal invariants and conformally invariant metrics are the key notions of geometric function theory and of quasiconformal mapping theory \cite{ah,avv,bm,V2}. However, for hyperbolic type metrics we cannot anymore expect the same invariance properties as the classical hyperbolic metric under conformal mappings. Fortunately, the hyperbolic type metrics still have some type of "near-invariance" or "quasi-invariance"  as desirable feature under (quasi)conformal mappings.
It is natural to ask what the Lipschitz constants are for these hyperbolic type metrics under (quasi)conformal mappings. For example, Gehring, Osgood,
and Palka proved that these metrics are not changed by more than a factor 2 under M\"obius transformations \cite{go,gp}, and the similar results for the other hyperbolic type metrics can be found in \cite{CHKV,hkvb,HVW,HVZ,imsz,jwz,klvw,KMS,MS2,WXV,XWZ}.

Let $D$ be a nonempty open set in a metric space $(X,d)$ with $\partial D\neq \emptyset$. Define
\begin{equation*}
h_{D,c}(x,y)=\log\left(1+c\frac{d(x,y)}{\sqrt{d_D(x)d_D(y)}}\right),
\end{equation*}
where $d_D(x)=d(x,\partial D)$ is the distance from $x$ to the boundary of $D$.
Dovgoshey, Hariri, and Vuorinen \cite{dhv} proved that $h_{D,c}$ is a metric for every $c\geq 2$ and the constant $c=2$ is the best possible in the sense that for each $c\in(0,2)$, there exists an open set $D$ such that the triangle inequality for $h_{D,c}$ fails. Before the work \cite{dhv}, H\"ast\"o \cite{hasto} has studied $h_{D,c}$ in the case when $X=\Rn$ and $D=\Rn\setminus\{0\}$. The metric $h_{D,c}$, as other hyperbolic type metrics, has applications in the study of geometric function theory,  such as estimates of the Kobayashi and quasihyperbolic distances \cite{na}, regularity of quasiconformal maps \cite{dhv}. In particular, the distortion of the metric $h_{D,c}$ under quasiconformal mappings is as follows \cite[Theorem 4.9]{dhv}: for all $c>0$, there exists a constant $C>1$ such that
$$h_{f(D),c}(f(x),f(y))\leq C\max\{h_{D,c}(x,y),h_{D,c}(x,y)^{\alpha}\},$$
where $D\subsetneqq\Rn$ is a uniform domain, $f:D\to f(D)$ is a $K$-quasiconformal mapping, and $\alpha=K^{1/(1-n)}$. Quasiconformal mappings are widely studied by use of various hyperbolic type metrics these days. The interested readers are referred to, e.g., \cite{CHKV,gh0,go,gp,hkvb,HVW,HVZ}.

In this paper we investigate the Lipschitz constants for the metric $h_{D,c}$ under M\"obius transformations of the unit ball, the upper half-space, and the punctured unit ball, respectively. The main results are as follows.

\begin{The}\label{thm:b2b}
Let $a\in\mathbb{B}^n$ and $f:\mathbb{B}^n\rightarrow\mathbb{B}^n=f\left(\mathbb{B}^n\right)$  be a M\"{o}bius transformation with $f\left(a\right)=0$. Then, for all  $x,y\in \mathbb{B}^n$,
\begin{equation*}
\frac{1}{1+|a|}\,h_{\mathbb{B}^n, c}\left( x, y \right)\leq h_{\mathbb{B}^n, c}\left( f\left(x\right), f\left(y\right) \right)\leq \left(1+|a|\right)\,h_{\mathbb{B}^n, c}\left( x, y \right),
\end{equation*}
where the constant $1+|a|$  is the best possible.
\end{The}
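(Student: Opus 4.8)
The plan is to reduce the statement to the M\"obius invariance of the hyperbolic metric $\rho=\rho_{\mathbb{B}^n}$ of the unit ball together with a one–variable optimization. Recall that $\rho(0,u)=\log\frac{1+|u|}{1-|u|}$, so $|u|=\tanh(\rho(0,u)/2)$, and that $\sinh^2(\rho(u,v)/2)=|u-v|^2/\bigl((1-|u|^2)(1-|v|^2)\bigr)$. Since $h_{\mathbb{B}^n,c}(x,y)$ depends only on $|x|,|y|,|x-y|$ and $d_{\mathbb{B}^n}(x)=1-|x|$, one writes
\[
\frac{d(x,y)}{\sqrt{d_{\mathbb{B}^n}(x)d_{\mathbb{B}^n}(y)}}=\frac{|x-y|}{\sqrt{(1-|x|)(1-|y|)}}=\sinh\!\Bigl(\frac{\rho(x,y)}{2}\Bigr)\sqrt{(1+|x|)(1+|y|)}.
\]
Because $f$ preserves $\rho$ and $f(a)=0$, we get $\rho(f(x),f(y))=\rho(x,y)$ and $|f(z)|=\tanh(\rho(0,f(z))/2)=\tanh(\rho(z,a)/2)$; combining these gives
\[
\frac{d(f(x),f(y))}{\sqrt{d_{\mathbb{B}^n}(f(x))d_{\mathbb{B}^n}(f(y))}}=\frac{d(x,y)}{\sqrt{d_{\mathbb{B}^n}(x)d_{\mathbb{B}^n}(y)}}\,\sqrt{g(x)g(y)},\qquad g(z):=\frac{1+|f(z)|}{1+|z|}=\frac{1+e^{-\rho(z,0)}}{1+e^{-\rho(z,a)}}.
\]

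The heart of the proof is the sharp bound $\frac{1}{1+|a|}\le g(z)\le 1+|a|$ for all $z\in\mathbb{B}^n$. Writing $\rho_0:=\rho(0,a)$, the triangle inequality $\rho(z,a)\le\rho(z,0)+\rho_0$ and monotonicity of $t\mapsto e^{-t}$ give $e^{-\rho(z,a)}\ge e^{-\rho(z,0)}e^{-\rho_0}$, hence $g(z)\le (1+w)/(1+we^{-\rho_0})$ with $w:=e^{-\rho(z,0)}\in(0,1]$; this function is increasing in $w$, so $g(z)\le 2/(1+e^{-\rho_0})=1+\tanh(\rho_0/2)=1+|a|$. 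The lower bound is symmetric: $\rho(z,0)\le\rho(z,a)+\rho_0$ gives $g(z)\ge(1+ve^{-\rho_0})/(1+v)$ with $v:=e^{-\rho(z,a)}\in(0,1]$, which is decreasing in $v$, so $g(z)\ge(1+e^{-\rho_0})/2=1/(1+|a|)$. In particular $\frac{1}{1+|a|}\le\sqrt{g(x)g(y)}\le 1+|a|$.

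Now set $\mu:=1+|a|\ge 1$, $s:=d(x,y)/\sqrt{d_{\mathbb{B}^n}(x)d_{\mathbb{B}^n}(y)}$ and $\lambda:=\sqrt{g(x)g(y)}\in[\mu^{-1},\mu]$, so that $h_{\mathbb{B}^n,c}(x,y)=\log(1+cs)$ and $h_{\mathbb{B}^n,c}(f(x),f(y))=\log(1+cs\lambda)$. The elementary facts $\log(1+\mu u)\le\mu\log(1+u)$ and $\mu\log(1+u/\mu)\ge\log(1+u)$ for $u\ge 0$, $\mu\ge 1$ (each difference vanishes at $u=0$ and is monotone in $u$) yield $\log(1+cs\lambda)\le\log(1+cs\mu)\le\mu\log(1+cs)$ and $\log(1+cs\lambda)\ge\log(1+cs/\mu)\ge\mu^{-1}\log(1+cs)$, which is exactly the claimed double inequality. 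For sharpness, take $x=0$ and $y=re_1$ with $r\to 0^+$: from the formula for $g$ one reads off $g(0)=2/(1+e^{-\rho_0})=1+|a|$, while $g(y)\to 1+|a|$ and $s\to 0$, so $\lambda\to 1+|a|$ and $h_{\mathbb{B}^n,c}(f(x),f(y))/h_{\mathbb{B}^n,c}(x,y)=\log(1+cs\lambda)/\log(1+cs)\to 1+|a|$; taking instead $x=a$, $y\to a$ (where $g(a)=(1+e^{-\rho_0})/2=1/(1+|a|)$) shows the left-hand constant $1/(1+|a|)$ is sharp as well. I expect the main obstacle to be the bound on $g$, although once recast via $\rho$ it reduces to the triangle inequality plus a single monotonicity check; some care is also needed to make the passage through the logarithmic inequalities rigorous.
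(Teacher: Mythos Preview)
Your argument is correct and takes a genuinely different route from the paper. The paper works directly with the explicit inversion $\sigma_a$: it expands $|\sigma_a(x)-\sigma_a(y)|$ and $1-|\sigma_a(x)|$ via the formula~\eqref{eq2}, then applies the algebraic identity and inequality of Lemma~\ref{yl17} (namely $|a|^2|b-a^*|^2-|b-a|^2=(1-|a|^2)(1-|b|^2)$ and $|b-a|/(|a||b-a^*|)\le(|a|+|b|)/(1+|a||b|)$) to produce the factor $1+|a|$, and finishes with Bernoulli~\eqref{bql}; sharpness is tested on $x=ta/|a|=-y$ with $t\to 0^+$. Your proof instead recognises that the ratio of the two $h_{\mathbb{B}^n,c}$-arguments is exactly $\sqrt{g(x)g(y)}$ with $g(z)=(1+|f(z)|)/(1+|z|)$, rewrites $g$ purely in terms of the M\"obius-invariant hyperbolic distances $\rho(z,0)$ and $\rho(z,a)$, and then obtains the sharp two-sided bound on $g$ from nothing more than the hyperbolic triangle inequality $|\rho(z,0)-\rho(z,a)|\le\rho_0$ plus a one-variable monotonicity check. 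The log step is the same Bernoulli-type inequality in both proofs. What your approach buys is conceptual clarity and portability: no explicit coordinates for $\sigma_a$ are needed, the role of the constant $1+|a|=1+\tanh(\rho_0/2)$ is transparent, and the sharpness at $z=0$ and $z=a$ falls out of the equality cases in the triangle inequality rather than a bespoke computation. What the paper's approach buys is self-containment: it does not invoke the hyperbolic metric or the formula $\sinh^2(\rho/2)=|x-y|^2/((1-|x|^2)(1-|y|^2))$, relying only on the elementary Lemma~\ref{yl17}.
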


\begin{The}\label{thm:b2h}
Let $f:\mathbb{B}^n\rightarrow\mathbb{H}^n=f(\mathbb{B}^n)$ be a M\"{o}bius transformation. Then, for all $x,y\in \mathbb{B}^n$\,,
\begin{align*}
h_{\mathbb{B}^n, c}\left( x, y \right)\leq h_{\mathbb{H}^n, c}\left( f\left(x\right), f\left(y\right) \right)\leq 2\,h_{\mathbb{B}^n, c}\left( x, y \right)\,,
\end{align*}
where the constants $1$ and $2$ are the  best possible.
\end{The}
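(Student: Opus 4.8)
\emph{Sketch of proof.}
Write $h_{D,c}(x,y)=\log\big(1+c\,m_D(x,y)\big)$, where $m_D(x,y)=d(x,y)/\sqrt{d_D(x)\,d_D(y)}$. Since $u\mapsto\log(1+cu)$ is increasing and $\log(1+2u)\le 2\log(1+u)$ for $u\ge0$ (because $(1+u)^2\ge1+2u$), it suffices to establish, for all $x,y\in\Bn$, the pointwise bounds
\[
m_{\Bn}(x,y)\ \le\ m_{\Hn}\big(f(x),f(y)\big)\ \le\ 2\,m_{\Bn}(x,y).
\]
Indeed, the left inequality gives $h_{\Bn,c}(x,y)\le h_{\Hn,c}(f(x),f(y))$ by monotonicity, and the right one gives $h_{\Hn,c}(f(x),f(y))\le\log\big(1+2c\,m_{\Bn}(x,y)\big)\le 2\,h_{\Bn,c}(x,y)$.

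The core of the argument is an exact formula for the middle quantity. I would use the classical identities $m_{\Hn}(w,w')=2\sinh\big(\rho_{\Hn}(w,w')/2\big)$ on $\Hn$ and $\sinh\big(\rho_{\Bn}(x,y)/2\big)=|x-y|/\sqrt{(1-|x|^2)(1-|y|^2)}$ on $\Bn$, where $\rho_D$ is the hyperbolic metric. Since $f$ is a M\"obius transformation from $\Bn$ onto $\Hn$, it is an isometry between $(\Bn,\rho_{\Bn})$ and $(\Hn,\rho_{\Hn})$, so $\rho_{\Hn}(f(x),f(y))=\rho_{\Bn}(x,y)$ and hence
\[
m_{\Hn}\big(f(x),f(y)\big)=\frac{2\,|x-y|}{\sqrt{(1-|x|^2)(1-|y|^2)}}=\frac{2}{\sqrt{(1+|x|)(1+|y|)}}\;m_{\Bn}(x,y);
\]
in particular this does not depend on the choice of $f$, reflecting the M\"obius invariance of $m_{\Hn}$ against only the ``quasi-invariance'' of $m_{\Bn}$. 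As $1\le\sqrt{(1+|x|)(1+|y|)}<2$ on $\Bn$, the multiplier lies in $(1,2]$, which is exactly the displayed two-sided estimate on $m$.

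For the sharpness statements I would push the multiplier to its extremes. To see that $2$ cannot be decreased, take $x=0$ and $y=te_1$ with $t\to0^+$: then $m_{\Bn}(x,y)=t/\sqrt{1-t}$ and $m_{\Hn}(f(x),f(y))=2t/\sqrt{1-t^2}$, both tending to $0$, and since $\log(1+cu)\sim cu$ the ratio $h_{\Hn,c}(f(x),f(y))/h_{\Bn,c}(x,y)\to2$. To see that $1$ cannot be increased, take $x=se_1$ and $y=-se_1$ with $s\to1^-$: then $m_{\Bn}(x,y)=2s/(1-s)$ and $m_{\Hn}(f(x),f(y))=4s/(1-s^2)$, both tending to $\infty$ with ratio $2/(1+s)\to1$, so $h_{\Hn,c}(f(x),f(y))/h_{\Bn,c}(x,y)\to1$.

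The single step I expect to need genuine care is the hyperbolic-isometry input: for an \emph{arbitrary} M\"obius $f:\Bn\to\Hn$ one must know $\rho_{\Hn}(f(x),f(y))=\rho_{\Bn}(x,y)$ (equivalently, $|f(x)-f(y)|=\sqrt{|f'(x)||f'(y)|}\,|x-y|$ together with $d_{\Hn}(f(x))=\tfrac12|f'(x)|(1-|x|^2)$), which is the M\"obius invariance of the hyperbolic metric and is best invoked from \cite{bm,V2} or recorded as a preliminary lemma rather than re-derived for a normalized Cayley map. Everything beyond that is the elementary $\log$-inequality and the two explicit one-parameter families above.
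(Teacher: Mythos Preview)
Your proof is correct and reaches the same key identity as the paper,
\[
m_{\Hn}\big(f(x),f(y)\big)=\frac{2\,|x-y|}{\sqrt{(1-|x|^2)(1-|y|^2)}},
\]
but you arrive at it by a genuinely different route. The paper normalizes $f$ as $\phi_s\circ\sigma\circ\psi_o^{-1}$ with $\sigma$ the inversion in $S^{n-1}(-e_n,\sqrt2)$, then computes $\sigma_n(x)=(1-|x|^2)/|x+e_n|^2$ and $|\sigma(x)-\sigma(y)|$ by hand; it also cites \cite[Lemma~2.11]{dhv} for the upper bound rather than deriving it. You instead invoke the identities $m_{\Hn}=2\sinh(\rho_{\Hn}/2)$ and $\sinh(\rho_{\Bn}/2)=|x-y|/\sqrt{(1-|x|^2)(1-|y|^2)}$ together with the M\"obius-isometry of the hyperbolic metric, which delivers the formula without any normalization and yields both inequalities at once via the single multiplier $2/\sqrt{(1+|x|)(1+|y|)}\in(1,2]$. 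Your approach is cleaner and more conceptual, at the cost of importing the hyperbolic-isometry fact from \cite{b,bm,V2}; the paper's is more self-contained but heavier in coordinates. For sharpness you use two different test families ($x=0,\,y=te_1$ and $x=-y=se_1$), whereas the paper uses the single family $x=-y=te_1$ and sends $t\to0^+$ and $t\to1^-$; either works.
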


\begin{The}\label{thm:h2h}
Let $f:\mathbb{H}^n\rightarrow\mathbb{H}^n=f(\mathbb{H}^n)$ be a M\"{o}bius transformation.
Then, for all $x,y\in \mathbb{H}^n$,
\begin{align*}
h_{\mathbb{H}^n, c}\left( f\left(x\right), f\left(y\right) \right)= h_{\mathbb{H}^n, c}\left( x, y \right).
\end{align*}
\end{The}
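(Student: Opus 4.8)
The plan is to reduce the claimed equality to the M\"obius invariance of a single scalar quantity. Since $d_{\mathbb{H}^n}(x)$ equals the $n$-th coordinate $x_n$ of $x\in\mathbb{H}^n$, and since $t\mapsto\log(1+c\sqrt{t})$ is strictly increasing on $[0,\infty)$, the identity $h_{\mathbb{H}^n,c}(f(x),f(y))=h_{\mathbb{H}^n,c}(x,y)$ is equivalent to
\begin{equation*}
\frac{|f(x)-f(y)|^2}{f(x)_n\,f(y)_n}=\frac{|x-y|^2}{x_n\,y_n}\qquad\text{for all }x,y\in\mathbb{H}^n .
\end{equation*}
So the whole theorem rests on showing that $|x-y|^2/(x_n y_n)$ is invariant under every M\"obius self-map of the upper half-space.

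First I would record the two pointwise identities that make this work. For any M\"obius transformation $f$ of $\overline{\mathbb{R}^n}$ and any $x,y$ in its domain one has the classical relation
\begin{equation*}
|f(x)-f(y)|^2=|f'(x)|\,|f'(y)|\,|x-y|^2,
\end{equation*}
where $|f'(x)|$ denotes the linear dilatation of $f$ at $x$; it is checked directly for similarities and for the inversion $x\mapsto x/|x|^2$ (using $|x/|x|^2-y/|y|^2|=|x-y|/(|x|\,|y|)$), and then extends to all M\"obius maps because both sides are multiplicative under composition. Next, if in addition $f$ maps $\mathbb{H}^n$ onto $\mathbb{H}^n$, then $f$ is an isometry of the hyperbolic metric $ds=|dx|/x_n$ of $\mathbb{H}^n$; pulling this metric back through $f$ and using $|df(x)|=|f'(x)|\,|dx|$ yields $f(x)_n=|f'(x)|\,x_n$ for every $x\in\mathbb{H}^n$. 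Substituting both identities into the left-hand side above, the factors $|f'(x)|$ and $|f'(y)|$ cancel and the desired equality drops out, whence $h_{\mathbb{H}^n,c}(f(x),f(y))=h_{\mathbb{H}^n,c}(x,y)$.

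Alternatively, and perhaps more cleanly, one can invoke the well-known formula $\cosh\rho_{\mathbb{H}^n}(x,y)=1+|x-y|^2/(2x_n y_n)$ relating the hyperbolic distance $\rho_{\mathbb{H}^n}$ to the quantity in question; since $\rho_{\mathbb{H}^n}$ is a M\"obius invariant of $\mathbb{H}^n$, so is $|x-y|^2/(x_n y_n)=4\sinh^2(\rho_{\mathbb{H}^n}(x,y)/2)$, and therefore $h_{\mathbb{H}^n,c}(x,y)=\log\!\bigl(1+2c\sinh(\rho_{\mathbb{H}^n}(x,y)/2)\bigr)$ is manifestly invariant. A third, fully elementary route is to verify the equality only on a generating set of the M\"obius group of $\mathbb{H}^n$, namely the translations parallel to $\partial\mathbb{H}^n$, the dilations $x\mapsto\lambda x$ with $\lambda>0$, the rotations fixing $e_n$, and the inversions in spheres centred on $\partial\mathbb{H}^n$; in each case both $|f(x)-f(y)|^2$ and $f(x)_n f(y)_n$ are computed in a line or two.

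I do not expect a genuine obstacle here: unlike Theorems~\ref{thm:b2b} and~\ref{thm:b2h}, there is no constant to optimise, and the statement is in essence a repackaging of the M\"obius invariance of the hyperbolic metric of $\mathbb{H}^n$ together with the fact that $d_{\mathbb{H}^n}(x)=x_n$ transforms by the dilatation factor. The only points requiring a little care are the bookkeeping of the identity $f(x)_n=|f'(x)|\,x_n$ (equivalently, that a M\"obius self-map of $\mathbb{H}^n$ commutes with the reflection $x\mapsto(x_1,\dots,x_{n-1},-x_n)$ in $\partial\mathbb{H}^n$) and checking that the argument also covers sense-reversing M\"obius maps, which it does since every identity used above is insensitive to orientation.
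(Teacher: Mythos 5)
Your argument is correct, but it takes a genuinely different route from the paper. You reduce everything to the M\"obius invariance of the quantity $|x-y|^2/(x_ny_n)$, which you obtain either from the distortion identity $|f(x)-f(y)|^2=|f'(x)||f'(y)||x-y|^2$ combined with $f(x)_n=|f'(x)|\,x_n$ (the latter coming from the fact that a M\"obius self-map of $\mathbb{H}^n$ is an isometry of $|dx|/x_n$), or, most cleanly, from the classical formula $\cosh\rho_{\mathbb{H}^n}(x,y)=1+|x-y|^2/(2x_ny_n)$, which gives $h_{\mathbb{H}^n,c}(x,y)=\log\bigl(1+2c\sinh(\rho_{\mathbb{H}^n}(x,y)/2)\bigr)$ and makes the invariance immediate. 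The paper instead argues elementarily and self-containedly: if $f(\infty)=\infty$ it is a similarity (Lemma \ref{yl19}) and one is done; otherwise $f(a)=\infty$ for a finite boundary point $a$, and the paper writes $f=\phi_s\circ\sigma\circ\varphi_t$ with $\varphi_t$ the translation by $-a$, $\sigma$ the inversion in $S^{n-1}(0,1)$, and $\phi_s$ a similarity obtained again from Lemma \ref{yl19}, then checks invariance of $|x-y|/\sqrt{x_ny_n}$ under $\sigma$ directly from \eqref{eq2} and $\sigma_n(x)=x_n/|x|^2$. Your approach buys brevity and conceptual clarity (the theorem is exposed as a repackaging of hyperbolic invariance, and it needs no decomposition or generating set), at the price of importing the derivative identity or the $\cosh\rho$ formula from the literature; the paper's proof uses only the inversion formula and Ratcliffe's characterization of similarities, staying at the same elementary level as the other proofs in the paper. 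Your third route (checking a generating set) is closest in spirit to the paper, but note that the paper sidesteps the need to know generators by constructing the explicit factorization $f=\phi_s\circ\sigma\circ\varphi_t$, so if you go that way you should either cite a generation result or produce such a factorization as the paper does.
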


\begin{The}\label{thm:pb}
Let $a\in\mathbb{B}^n$ and $f:\mathbb{B}^n\rightarrow\mathbb{B}^n=f\left(\mathbb{B}^n\right)$ be a  M\"obius transformation  with $f\left(0\right)=a$.
Then, for all $x,y\in\mathbb{B}^n\backslash\{0\}$, we have
\begin{align*}
h_{\mathbb{B}^n\backslash\{a\},c}\left(f\left(x\right),f\left(y\right)\right)\leq \left(1+|a|\right)h_{\mathbb{B}^n\backslash\{0\},c}\left(x,y\right).
\end{align*}
\end{The}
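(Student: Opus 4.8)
Write $D=\Bn\setminus\{0\}$ and $D'=\Bn\setminus\{a\}$, so that $f$ maps $D$ onto $D'$, and note that $d_D(x)=\min\{|x|,1-|x|\}$ and $d_{D'}(w)=\min\{|w-a|,1-|w|\}$. The plan has three parts: an elementary step that removes the logarithm, a reduction of the two-point estimate to a one-point estimate via the M\"obius invariance identity for the ball, and a short case analysis settling the one-point estimate. For the first part, the function $\varphi(t)=\log(1+ct)$ is concave on $[0,\infty)$ with $\varphi(0)=0$, hence $\varphi(\lambda t)\le\lambda\varphi(t)$ for all $\lambda\ge1$ and $t\ge0$; taking $\lambda=1+|a|$, it suffices to prove the scale-invariant inequality
\[
\frac{|f(x)-f(y)|}{\sqrt{d_{D'}(f(x))\,d_{D'}(f(y))}}\le(1+|a|)\,\frac{|x-y|}{\sqrt{d_D(x)\,d_D(y)}}.
\]

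For the second part, the M\"obius invariance identity for self-maps of $\Bn$ gives
\[
\frac{|f(x)-f(y)|}{|x-y|}=\sqrt{\frac{(1-|f(x)|^2)(1-|f(y)|^2)}{(1-|x|^2)(1-|y|^2)}},
\]
so, after cancelling $|x-y|$, the displayed inequality is equivalent to $\sqrt{P(x)P(y)}\le1+|a|$, where
\[
P(w)=\frac{(1-|f(w)|^2)\,d_D(w)}{(1-|w|^2)\,d_{D'}(f(w))}.
\]
Thus it is enough to prove $P(w)\le1+|a|$ for every $w\in\Bn\setminus\{0\}$.

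For the third part, write $f=\sigma_a\circ g$, where $\sigma_a$ is a M\"obius automorphism of $\Bn$ with $\sigma_a(0)=a$ and $g=\sigma_a^{-1}\circ f$ fixes the origin, hence is orthogonal. Putting $z=g(w)$, $u=|w|=|z|$, $\alpha=|a|$ and $q=1+\alpha^2u^2-2\,z\cdot a$, the standard identities for $\sigma_a$ yield
\[
|f(w)|=\frac{|z-a|}{\sqrt q},\qquad |f(w)-a|=\frac{u(1-\alpha^2)}{\sqrt q},\qquad 1-|f(w)|^2=\frac{(1-\alpha^2)(1-u^2)}{q},
\]
together with $(1-\alpha u)^2\le q\le(1+\alpha u)^2$ and $q-|z-a|^2=(1-\alpha^2)(1-u^2)$. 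Substituting,
\[
P(w)=\frac{(1-\alpha^2)\min\{u,1-u\}}{\sqrt q\,\min\{u(1-\alpha^2),\,\sqrt q-|z-a|\}},
\]
and one splits according to which term realises each minimum. When $d_{D'}(f(w))=|f(w)-a|$, the bound $P(w)\le1+|a|$ reduces after cancellation to $\sqrt q\ge1/(1+\alpha)$ if $u\le1/2$ and to $1-u\le(1+\alpha)u\sqrt q$ if $u\ge1/2$; both follow from $\sqrt q\ge1-\alpha u$ together with $u(1+\alpha)\le1$, resp.\ an elementary quadratic estimate on $[1/2,1]$.

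The delicate cases are those with $d_{D'}(f(w))=1-|f(w)|$, i.e.\ where the image point is nearer the sphere than the puncture $a$. Using $\sqrt q-|z-a|=(1-\alpha^2)(1-u^2)/(\sqrt q+|z-a|)$ to clear the denominator and writing $\mu=\min\{u,1-u\}$, the required bound becomes $\mu(\sqrt q+|z-a|)\le(1+\alpha)(1-u^2)\sqrt q$. One first checks $N:=(1+\alpha)(1-u^2)-\mu\ge0$, then replaces $|z-a|$ by $\sqrt{q-(1-\alpha^2)(1-u^2)}$, isolates the square root and squares, reducing the claim to
\[
q\,(\mu^2-N^2)\le\mu^2(1-\alpha^2)(1-u^2).
\]
If $\mu\le N$ the left-hand side is nonpositive and there is nothing to prove; if $\mu>N$, one estimates $q\le(1+\alpha u)^2$ and is left with an explicit polynomial inequality in $u$ and $\alpha$, with equality occurring only in the trivial case $a=0$, which is verified directly. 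I expect this final step — selecting the right quantity to square and disposing of the residual polynomial inequality — to be the main technical obstacle; everything preceding it is routine manipulation of the M\"obius identities.
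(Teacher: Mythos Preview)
Your approach is correct and genuinely different from the paper's. The paper reduces to $f=\sigma_a$ and then carries out a direct twelve-case analysis (four choices for the pair of minima defining $T$, crossed with three subcases for the positions of $|x|,|y|$ relative to $1/2$), invoking Lemmas~\ref{le 11}, \ref{le 12} and Bernoulli's inequality in each subcase and only taking the maximum of the resulting constants at the end. Your key new idea is to exploit the invariance of $|x-y|^2/\bigl((1-|x|^2)(1-|y|^2)\bigr)$ under M\"obius self-maps of $\Bn$ to decouple the two-point estimate into the product $\sqrt{P(x)P(y)}$, reducing everything to the single pointwise bound $P(w)\le 1+|a|$. This collapses the paper's twelve cases to four and is conceptually cleaner; the paper never uses this invariance identity.

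One remark on your ``delicate'' case $d_{D'}(f(w))=1-|f(w)|$: you are making it harder than necessary. Since $1-|f(w)|^2=(1-|f(w)|)(1+|f(w)|)$, in this case $P(w)=\mu(1+|f(w)|)/(1-u^2)$, and the bound $|f(w)|=|z-a|/\sqrt q\le (u+\alpha)/(1+\alpha u)$ from Lemma~\ref{yl17}(2) gives directly
\[
P(w)\le \frac{\mu(1+\alpha)}{(1-u)(1+\alpha u)}.
\]
It then remains only to check $\mu\le(1-u)(1+\alpha u)$, which is immediate: for $u\ge 1/2$ one has $\mu=1-u$ and the claim is $1\le 1+\alpha u$; for $u\le 1/2$ one has $\mu=u$ and the claim rearranges to $2u-1\le\alpha u(1-u)$, whose left side is nonpositive. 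This replaces your squaring-and-polynomial argument, which you left unverified, by a two-line computation; no ``main technical obstacle'' remains.
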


\section{M\"obius transformations and some inequalities}

We use the standard notations as in \cite{avv}.  The standard basis in the Euclidean space $\Rn$, $n>2$, is denoted by $\{e_1,e_2,\dots, e_n\}$. A point $x\in\Rn$ is represented as $(x_1,x_2,\dots, x_n)$.  The ball centered at $x\in\Rn$ with radius $r>0$ is $B^n(x,r)=\{y\in\Rn:|x-y|<r\}$ and the sphere with the same center and radius is $S^{n-1}(x,r)=\{y\in\Rn:|x-y|=r\}$. We use the abbreviation $\Bn=B^n(0,1)$. The upper half space is denoted by $\Hn=\{x\in\Rn:x_n>0\}$ and $\overline\Hn=\Hn\cup\{\infty\}$. For basic facts about M\"obius transformations the reader is referred to \cite{b,r,V2}.
We denote $x^*=x/|x|^2$ for $x\in\Rn\setminus\{0\}$, and $0^*=\infty$, $\infty^*=0$.
For $a\in \mathbb{B}^n\setminus\{0\}$, let
$$\sigma_a\left(x\right)=a^*+r^2\left(x-a^*\right)^*,\,\, r^2=|a|^{-2}-1$$
be the inversion in the sphere $S^{n-1}\left(a^*,r\right)$. Then $\sigma_a\left(a\right)=0$, $\sigma_a\left(a^*\right)=\infty$, and
(\cite[(3.1.5)]{b})
\begin{align}\label{eq2}
|\sigma_a\left(x\right)-\sigma_a \left(y\right)|=\frac{r^2|x-y|}{|x-a^*||y-a^*|}\,.
\end{align}

The following lemmas show some fundamental results on M\"obius transformations.

\begin{lemma}\label{le9}
\cite[Theorem 3.5.1]{b}
Let $a\in \mathbb{B}^n\setminus\{0\}$, and let $f$ be a M\"obius transformation of the unit ball with $f\left(a\right)=0$. Then
\begin{align*}
f\left(x\right)=\left(A\circ\sigma_a\right)\left(x\right),
\end{align*}
where $A$ is an orthogonal transformation.
\end{lemma}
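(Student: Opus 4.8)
This lemma is Beardon's structure theorem for M\"obius self-maps of the ball, \cite[Theorem 3.5.1]{b}; I sketch the argument one gives for it. The plan is to compose $f$ with the explicit inversion $\sigma_a$ so as to obtain a M\"obius self-map of $\Bn$ fixing the origin, and then to show that every such map is an orthogonal transformation.

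First I would verify that $\sigma_a$ is itself a M\"obius transformation of $\Bn$ onto $\Bn$. Since $r^2=|a|^{-2}-1$ we have $|a^*|^2=r^2+1$, so the sphere $S^{n-1}(a^*,r)$ is orthogonal to the unit sphere $S^{n-1}$; hence the inversion $\sigma_a$ in $S^{n-1}(a^*,r)$ maps $S^{n-1}$ onto itself, and since it is a homeomorphism of $\overline\Rn$ carrying $a\in\Bn$ to $0\in\Bn$, it maps $\Bn$ onto $\Bn$. Being an inversion, $\sigma_a$ is an involution, so $\sigma_a^{-1}=\sigma_a$, and applying $\sigma_a$ to the identity $\sigma_a(a)=0$ gives $\sigma_a(0)=a$. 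Now set $g=f\circ\sigma_a$: this is a M\"obius transformation of $\Bn$ onto $\Bn$ with $g(0)=f(\sigma_a(0))=f(a)=0$. If we show $g$ is an orthogonal transformation $A$, then $f=g\circ\sigma_a^{-1}=g\circ\sigma_a=A\circ\sigma_a$, as claimed.

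The remaining and main point is that a M\"obius transformation $g$ of $\Bn$ onto $\Bn$ with $g(0)=0$ is orthogonal. Regard $g$ as a M\"obius transformation of $\overline\Rn$ that keeps $S^{n-1}$ invariant. Conjugating the reflection $\rho(x)=x^*$ in $S^{n-1}$ by $g$ gives the reflection in $g(S^{n-1})=S^{n-1}$, that is $g\circ\rho\circ g^{-1}=\rho$, so $g$ and $\rho$ commute. Since $\rho(0)=\infty$ and $g(0)=0$, it follows that $g(\infty)=g(\rho(0))=\rho(g(0))=\rho(0)=\infty$, so $g$ fixes $\infty$. A M\"obius transformation fixing $\infty$ is a Euclidean similarity $g(x)=\lambda Qx+b$ with $\lambda>0$ and $Q$ orthogonal; the condition $g(0)=0$ forces $b=0$, and $g(S^{n-1})=S^{n-1}$ forces $\lambda=1$, whence $g=Q=:A$ is orthogonal.

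The step I expect to require the most care is the classification in the last paragraph: it rests on the two standard facts that a M\"obius map commutes with the reflection in any sphere it leaves invariant, and that a M\"obius map fixing $\infty$ is a similarity, both of which are available from \cite{b,r,V2}. Everything else reduces to the orthogonality computation $|a^*|^2=r^2+1$ and the involution property of $\sigma_a$.
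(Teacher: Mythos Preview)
Your sketch is correct and is precisely the standard proof from \cite[Theorem 3.5.1]{b}; the paper itself gives no proof of this lemma but merely cites Beardon. Note that the key step you isolate---that a M\"obius self-map of $\Bn$ fixing the origin is orthogonal---is exactly the content of Lemma~\ref{yl18}, which the paper also quotes (from \cite{r}) without proof, so your argument in effect supplies proofs of both cited lemmas at once.
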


\begin{lemma}\label{yl18}
\cite[Theorem 4.4.8]{r}
Let $f:\overline{\mathbb{R}^n}\to \overline{\mathbb{R}^n}$ be a M\"{o}bius transformation with $f(\Bn)=\Bn$. Then $f$ is an orthogonal transformation from $\mathbb{R}^n$ onto itself if and only if $f\left(0\right)=0$.
\end{lemma}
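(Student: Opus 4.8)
The plan is to prove the nontrivial implication, that a M\"obius self-map $f$ of $\Bn$ fixing the origin must be orthogonal; the reverse implication is immediate, since an orthogonal transformation is linear (hence fixes $0$) and preserves Euclidean norms (hence preserves $\Bn$).

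So suppose $f:\overline{\Rn}\to\overline{\Rn}$ is a M\"obius transformation with $f(\Bn)=\Bn$ and $f(0)=0$. The first step is to observe that $f$ also fixes $\infty$. Since $f(\Bn)=\Bn$, it maps the unit sphere $S^{n-1}=S^{n-1}(0,1)$ onto itself, and a M\"obius transformation carrying a sphere onto itself commutes with the inversion in that sphere: writing $\rho(x)=x^*$ for the inversion in $S^{n-1}$, the map $f\circ\rho\circ f^{-1}$ is the inversion in $f(S^{n-1})=S^{n-1}$, so $f\circ\rho\circ f^{-1}=\rho$, i.e.\ $f\circ\rho=\rho\circ f$. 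Evaluating at $0$ and using the conventions $0^*=\infty$, $\infty^*=0$ gives $f(\infty)=f(0^*)=f(0)^*=0^*=\infty$.

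The second step uses the structure of M\"obius transformations fixing $\infty$: such a map sends hyperplanes to hyperplanes, hence is affine on $\Rn$, and being conformal it is a Euclidean similarity, $f(x)=\lambda Ax+b$ with $\lambda>0$, $A\in O(n)$, and $b\in\Rn$. Now $f(0)=0$ forces $b=0$, so $f(x)=\lambda Ax$; and $f(S^{n-1})=S^{n-1}$ together with $|Ax|=|x|$ forces $\lambda=1$. Hence $f(x)=Ax$ is orthogonal.

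The only nonroutine ingredients are the two structural facts used above: that a sphere-preserving M\"obius map commutes with the inversion in that sphere, and that an $\infty$-fixing M\"obius map of $\overline{\Rn}$ is a similarity. Both are standard (see \cite{b,r}), and I expect making the first one fully precise, via the characterization of an inversion as the unique M\"obius involution that fixes the sphere pointwise and interchanges the two components of its complement, to be the only point needing a little care. An alternative, slightly heavier route is to note that $f$ restricts to an isometry of the hyperbolic ball fixing its center, and then invoke the fact that the stabilizer of the center in the hyperbolic isometry group is exactly $O(n)$.
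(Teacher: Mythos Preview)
Your argument is correct, but note that the paper does not actually prove this lemma: it is simply quoted as \cite[Theorem 4.4.8]{r} and used as a black box. So there is no ``paper's own proof'' to compare against.

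That said, your write-up supplies a clean self-contained justification where the paper relies on an external reference. The two structural facts you invoke are exactly the ones the paper imports from \cite{b,r}: the similarity characterization of M\"obius maps fixing $\infty$ is the paper's Lemma~\ref{yl19}, and the commutation $f\circ\rho=\rho\circ f$ follows, as you say, from the uniqueness of the inversion among M\"obius involutions fixing $S^{n-1}$ pointwise and exchanging its complementary components (here the hypothesis $f(\Bn)=\Bn$, not merely $f(S^{n-1})=S^{n-1}$, guarantees that $f\circ\rho\circ f^{-1}$ really does swap the two sides). Your alternative route via the hyperbolic isometry group and the stabilizer of the center is essentially how Ratcliffe organizes the proof in \cite{r}, so either path is standard.
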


\begin{lemma}\label{yl19}
\cite[Theorem 4.3.2]{r}
Let $f:\overline{\mathbb{R}^n}\rightarrow \overline{\mathbb{R}^n}$ be a M\"{o}bius transformation.
Then $f$ is a similarity from $\mathbb{R}^n$ onto itself if and only if $f\left(\infty\right)=\infty$.
\end{lemma}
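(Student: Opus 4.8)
The plan is to prove the two implications of Lemma~\ref{yl19} separately, using only standard properties of M\"obius transformations available in \cite{b,r}. The forward implication is immediate: a similarity of $\Rn$ has the form $x\mapsto\lambda Ax+b$ with $\lambda>0$, $A$ orthogonal and $b\in\Rn$, and being affine it extends to a homeomorphism of $\overline{\Rn}$ that fixes $\infty$. (One could also check directly that such a map is M\"obius, but this is not needed here since $f$ is assumed M\"obius at the outset.)

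For the converse, assume $f$ is M\"obius with $f(\infty)=\infty$. The key input is that M\"obius transformations preserve the cross-ratio: with the convention $[a,b,c,d]=\dfrac{|a-c|\,|b-d|}{|a-d|\,|b-c|}$ for $a,b,c,d\in\overline{\Rn}$ (interpreting $|x-\infty|/|y-\infty|$ as $1$), one has $[f(a),f(b),f(c),f(d)]=[a,b,c,d]$. Taking $d=\infty$ and using $f(\infty)=\infty$ gives, for all $x,y,z\in\Rn$,
\[
\frac{|f(x)-f(z)|}{|f(y)-f(z)|}=\frac{|x-z|}{|y-z|},
\qquad\text{equivalently}\qquad
\frac{|f(x)-f(z)|}{|x-z|}=\frac{|f(y)-f(z)|}{|y-z|}.
\]
Hence for fixed $z$ the quantity $\mu(z):=|f(x)-f(z)|/|x-z|$ is independent of $x$; interchanging the roles of $x$ and $z$ then forces $\mu$ to be a single positive constant $\lambda$, so $|f(x)-f(y)|=\lambda|x-y|$ for all $x,y\in\Rn$. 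Since $f$ is a bijection of $\overline{\Rn}$ fixing $\infty$, it maps $\Rn$ onto $\Rn$, so $f|_{\Rn}$ is a surjective $\lambda$-scaling self-map of $\Rn$.

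It then remains to recognize such a map as a classical similarity $x\mapsto\lambda Ax+b$. For this I would put $g(x)=\lambda^{-1}\bigl(f(x)-f(0)\bigr)$, which is an isometry of $\Rn$ fixing the origin; by polarization $g$ preserves the Euclidean inner product, hence sends the standard basis to an orthonormal basis and is therefore linear with orthogonal matrix, giving $f(x)=\lambda Ax+f(0)$. I expect this last normalization, together with pinning down the $d=\infty$ specialization of the cross-ratio, to be the only points that need care; the rest is formal. As an alternative to the cross-ratio argument one can instead use that $f$ permutes the family of generalized spheres and fixes $\infty$, so it carries Euclidean hyperplanes to Euclidean hyperplanes and Euclidean spheres to Euclidean spheres; then $f|_{\Rn}$ sends lines to lines and is affine by the fundamental theorem of affine geometry, with linear part forced to be a positive scalar times an orthogonal matrix because it takes $S^{n-1}$ to a round sphere.
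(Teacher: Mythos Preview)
Your argument is correct. Note, however, that the paper does not give its own proof of this lemma: it is quoted verbatim from Ratcliffe \cite[Theorem 4.3.2]{r} and used as a black box. So there is no in-paper proof to compare against.

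For what it is worth, your cross-ratio argument is essentially the standard one and is close in spirit to Ratcliffe's own treatment: the absolute cross-ratio $[x,y,z,\infty]=|x-z|/|y-z|$ together with $f(\infty)=\infty$ forces $|f(x)-f(z)|/|x-z|$ to be a constant $\lambda$, and then the isometry $g(x)=\lambda^{-1}(f(x)-f(0))$ fixing $0$ is linear orthogonal by polarization. The alternative route you sketch via the fundamental theorem of affine geometry also works but is heavier than needed. Either way, the forward implication is trivial and your converse is clean; nothing to fix.
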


The following inequalities are useful in the proofs of main results.

\begin{lemma}\label{yl17}\cite[Lemma 3.2]{SVW}
Let $a\,, b \in \mathbb{B}^n$. Then
\begin{enumerate}
\item  $|a|^2|b-a^{*}|^2-|b-a|^2=\left(1-|a|^2\right) \left(1-|b|^2\right)\,;$
\item $$\frac{||b|-|a||}{1-|a||b|}\leq \frac{|b-a|}{|a||b-a^*|}\leq\frac{|b|+|a|}{1+|a||b|}.$$
\end{enumerate}
\end{lemma}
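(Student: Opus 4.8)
The plan is to prove (1) by a direct algebraic expansion and then to deduce (2) from (1) via a monotonicity argument combined with the Euclidean triangle inequality; no deep input is needed beyond these.

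For (1), I would substitute $a^{*}=a/|a|^{2}$ (implicitly $a\neq 0$, since otherwise $a^{*}$ is undefined; note that as $a\to 0$ both outer quantities in (2) tend to $|b|$). Expanding,
$$|a|^{2}|b-a^{*}|^{2}=|a|^{2}\bigl(|b|^{2}-2\langle b,a^{*}\rangle+|a^{*}|^{2}\bigr)=|a|^{2}|b|^{2}-2\langle b,a\rangle+1,$$
using $\langle b,a^{*}\rangle=\langle b,a\rangle/|a|^{2}$ and $|a^{*}|^{2}=1/|a|^{2}$. Subtracting $|b-a|^{2}=|b|^{2}-2\langle b,a\rangle+|a|^{2}$ leaves $|a|^{2}|b|^{2}+1-|b|^{2}-|a|^{2}=(1-|a|^{2})(1-|b|^{2})$, which is exactly (1).

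For (2), I would first use (1) to rewrite the square of the middle quantity as
$$\frac{|b-a|^{2}}{|a|^{2}|b-a^{*}|^{2}}=\frac{|b-a|^{2}}{|b-a|^{2}+(1-|a|^{2})(1-|b|^{2})}.$$
Since $a,b\in\Bn$, the constant $C:=(1-|a|^{2})(1-|b|^{2})$ is positive, so $t\mapsto t/(t+C)$ is strictly increasing on $[0,\infty)$. The Euclidean triangle inequality gives $(\lvert|b|-|a|\rvert)^{2}\leq|b-a|^{2}\leq(|b|+|a|)^{2}$, hence the displayed ratio lies between $(\lvert|b|-|a|\rvert)^{2}/\bigl((\lvert|b|-|a|\rvert)^{2}+C\bigr)$ and $(|b|+|a|)^{2}/\bigl((|b|+|a|)^{2}+C\bigr)$. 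The one point that has to be noticed is that the endpoint denominators collapse into perfect squares:
$$(\lvert|b|-|a|\rvert)^{2}+C=1-2|a||b|+|a|^{2}|b|^{2}=(1-|a||b|)^{2},\qquad (|b|+|a|)^{2}+C=(1+|a||b|)^{2}.$$
Thus the outer fractions equal $\bigl(\lvert|b|-|a|\rvert/(1-|a||b|)\bigr)^{2}$ and $\bigl((|b|+|a|)/(1+|a||b|)\bigr)^{2}$; taking nonnegative square roots (and using $1-|a||b|>0$ since $|a|,|b|<1$) yields (2).

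The main obstacle is purely bookkeeping: the only non-mechanical observation is that plugging the extreme values of $|b-a|^{2}$ into $|b-a|^{2}+C$ produces the factorizations $(1\mp|a||b|)^{2}$, which is precisely what makes the two-sided bound clean; everything else is expansion of inner products and the standard triangle inequality.
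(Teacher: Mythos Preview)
Your proof is correct in both parts. Note, however, that the paper does not actually supply a proof of this lemma: it is stated as a quotation of \cite[Lemma~3.2]{SVW} and left without argument. So there is no ``paper's own proof'' to compare against; you have simply filled in the details that the authors outsourced to the reference, and your computation for (1) together with the monotonicity-plus-factorization argument for (2) is the standard one (and essentially the argument in \cite{SVW} as well). The only caveat worth flagging explicitly is that (1) and the middle term of (2) presuppose $a\neq 0$, which you do mention parenthetically; the boundary case $a=0$ of (2) is trivial since all three quantities reduce to $|b|$.
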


\begin{lemma}\label{le 11}
For $a,z\in\mathbb{B}^n$, there holds
\begin{align*}
&|a||z-a^*|\geq1-|a||z|.
\end{align*}
\end{lemma}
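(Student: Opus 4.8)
The plan is to pass to the squared inequality and combine Lemma~\ref{yl17}(1) with the reverse triangle inequality in $\mathbb{R}^n$.

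Since $a,z\in\mathbb{B}^n$ we have $|a||z|<1$, so $1-|a||z|>0$ and the asserted estimate is equivalent to
\begin{equation*}
|a|^2|z-a^*|^2\geq\left(1-|a||z|\right)^2 .
\end{equation*}
(When $a=0$ one reads $|a|^2|z-a^*|^2$ off the polynomial identity below, where its value is $1$, and the inequality becomes $1\ge 1$; so we may assume $a\neq 0$.) By Lemma~\ref{yl17}(1) applied with $b=z$,
\begin{equation*}
|a|^2|z-a^*|^2=|z-a|^2+\left(1-|a|^2\right)\left(1-|z|^2\right),
\end{equation*}
while a one-line expansion yields
\begin{equation*}
\left(1-|a||z|\right)^2=\bigl||a|-|z|\bigr|^2+\left(1-|a|^2\right)\left(1-|z|^2\right).
\end{equation*}
Subtracting, the inequality to be proved collapses to $|z-a|^2\geq\bigl||a|-|z|\bigr|^2$, that is, to $|z-a|\geq\bigl||a|-|z|\bigr|$, which is the reverse triangle inequality. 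Taking square roots gives the claim.

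A second, entirely self-contained route is to expand directly: with $a^*=a/|a|^2$ one has $|a|^2|z-a^*|^2=1-2\,(a\cdot z)+|a|^2|z|^2$, and since $a\cdot z\leq|a||z|$ by the Cauchy--Schwarz inequality, this is at least $1-2|a||z|+|a|^2|z|^2=(1-|a||z|)^2$. I do not expect a genuine obstacle in either approach; the only points that require care are recording that $1-|a||z|>0$ (so that squaring is reversible) and fixing the convention for the degenerate value $a=0$.
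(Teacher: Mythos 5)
Your proof is correct, but it is more roundabout than the paper's. The paper proves the lemma in one line by applying the reverse triangle inequality directly to the pair $z$, $a^*$: since $|a^*|=1/|a|\geq 1>|z|$, one has $|a||z-a^*|\geq |a|\bigl||z|-|a^*|\bigr|=|a|\left(|a^*|-|z|\right)=1-|a||z|$, with no squaring and no auxiliary identity. Your first route squares the inequality, invokes Lemma~\ref{yl17}(1) and the algebraic identity $\left(1-|a||z|\right)^2=\left(|a|-|z|\right)^2+\left(1-|a|^2\right)\left(1-|z|^2\right)$, and reduces to the reverse triangle inequality for $z$ and $a$; your second route expands $|a|^2|z-a^*|^2$ and uses Cauchy--Schwarz, which is essentially the same inequality in squared form. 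Both of your arguments are valid (and you are right that squaring is harmless because $1-|a||z|>0$, and careful about the degenerate case $a=0$, which the paper's statement tacitly excludes since $a^*=\infty$ there), but they spend extra machinery on what the paper obtains immediately; the only thing the detour buys is that it bypasses any explicit mention of $|a^*|=1/|a|$.
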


\begin{proof}
By the triangle inequality, we have
\begin{align*}
|a||z-a^*|\geq|a|||z|-|a^*||=1-|a||z|.
\end{align*}
\end{proof}


\begin{lemma}\label{le 12}
For $a,z\in\mathbb{B}^n$ with $\frac{1}{2}\leq|z|\leq1$, there holds
\begin{align*}
|z|\left(1-|a||z|\right)\geq\left(1-\frac{|a|}{2}\right)\left(1-|z|\right).
\end{align*}
\end{lemma}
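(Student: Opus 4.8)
The plan is to reduce the claim to a one-line algebraic factorization. Write $s=|a|\in[0,1)$ and $t=|z|\in[\tfrac12,1]$; the asserted inequality is then
\begin{equation*}
t(1-st)\geq\Bigl(1-\tfrac{s}{2}\Bigr)(1-t).
\end{equation*}
First I would bring everything to one side and collect the terms according to the power of $s$, obtaining that the difference of the two sides equals
\begin{equation*}
(2t-1)-\frac{s}{2}\bigl(2t^{2}+t-1\bigr).
\end{equation*}

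The key step is to notice the factorization $2t^{2}+t-1=(2t-1)(t+1)$, so that the difference becomes
\begin{equation*}
(2t-1)\Bigl(1-\frac{s}{2}(t+1)\Bigr).
\end{equation*}
Now both factors are nonnegative: the first because $t=|z|\geq\tfrac12$, and the second because $s=|a|<1$ and $t=|z|\leq1$ give $\tfrac{s}{2}(t+1)\leq\tfrac{s}{2}\cdot 2=s<1$. Hence the product is $\geq 0$, which is exactly the claim.

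There is no serious obstacle here; the only thing to get right is the sign bookkeeping in the rearrangement and spotting that $2t^2+t-1$ shares the root $t=\tfrac12$ with the factor $2t-1$, which is what makes the hypothesis $|z|\geq\tfrac12$ do its work. I would present the computation as a short displayed chain of equalities followed by the two sign observations.
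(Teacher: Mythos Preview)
Your proof is correct and follows essentially the same approach as the paper: both compute the difference of the two sides and arrive at the identical factorization $(2|z|-1)\bigl(1-\tfrac{|a|}{2}-\tfrac{|a||z|}{2}\bigr)$, after which the sign conclusion is immediate from $|z|\ge\tfrac12$ and $|a|,|z|\le 1$. Your route to the factorization---grouping by powers of $s=|a|$ and spotting that $2t^{2}+t-1=(2t-1)(t+1)$---is a slightly tidier bookkeeping than the paper's add-and-subtract manipulation, but the substance is the same.
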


\begin{proof}
It is easy to see that
\begin{align*}
&|z|\left(1-|a||z|\right)-\left(1-\frac{|a|}{2}\right)\left(1-|z|\right)\\
=&2|z|-1+\frac{|a|}{2}-|a||z|^2-\frac{|a||z|}{2}\\
=&2|z|-1+\frac{|a|}{2}-|a||z|-|a||z|^2+\frac{|a||z|}{2}\\
=&2|z|-1-\left(2|z|-1\right)\frac{|a|}{2}-\left(2|z|-1\right)\frac{|a||z|}{2}\\
=&\left(2|z|-1\right)\left(1-\frac{|a|}{2}-\frac{|a||z|}{2}\right)\geq0.
\end{align*}
\end{proof}

For the reference, we record Bernoulli's inequality here:
\begin{align}\label{bql}
r\log\left(1+t\right)\geq \log\left(1+rt\right), \quad r\geq1,\quad t>0.
\end{align}

\section{Proofs of the main results}

\subsection{Proof of Theorem \ref{thm:b2b}}
For $a=0$, Lemma \ref{yl18} implies that $f$  is an orthogonal transformation. Since $h_{\mathbb{B}^n, c}$ is unchanged under orthogonal transformations, for all $x,y\in \mathbb{B}^n$\,, we have
\begin{align*}
h_{\mathbb{B}^n, c}\left( f\left(x\right), f \left(y\right) \right)= h_{\mathbb{B}^n, c}\left( x, y \right)
\end{align*}
as desired.

Now we assume $a\neq0$. Since $f=A\circ\sigma_a$ by Lemma \ref{le9}, we have for all $x, y\in \mathbb{B}^n$
\begin{align*}
 h_{\mathbb{B}^n, c}\left( f\left(x\right), f \left(y\right) \right)= h_{\mathbb{B}^n, c}\left( \sigma_a\left(x\right), \sigma_a \left(y\right) \right)\,.
\end{align*}
The equation \eqref{eq2} implies that
\begin{align*}
|\sigma_a\left(x\right)-\sigma_a \left(y\right)|=\frac{\left(|a|^{-2}-1\right)|x-y|}{|x-a^*||y-a^*|}
\quad\hbox {and}\quad
1-|\sigma_a \left(x\right)|=\frac{|a||x-a^*|-|x-a|}{|a||x-a^*|}\,.
\end{align*}

We first prove the right-hand side of the inequalities. By use of Lemma \ref{yl17} and  Bernoulli's inequality \eqref{bql}, we have
 \begin{align*}
 &h_{\mathbb{B}^n, c}\left( \sigma_a\left(x\right), \sigma_a\left(y\right)\right)\\
 &=\log\left(1+c \frac{|\sigma_a\left(x\right)-\sigma_a \left(y\right)|}{\sqrt{\left(1-|\sigma_a \left(x\right)|\right)\left(1-|\sigma_a \left(y\right)|\right)}}\right)\,\\
 &=\log\left(1+c \frac{|x-y|}{|a|\sqrt{|x-a^*||y-a^*|}} \sqrt{\frac{1}{\left(|a||x-a^*|-|x-a|\right)\left(|a||y-a^*|-|y-a|\right)}}\right)\\
 &=\log\left(1+c \frac{|x-y|}{|a|\sqrt{|x-a^*||y-a^*|}} \sqrt{\frac{\left(|a||x-a^*|+|x-a|\right)\left(|a||y-a^*|+|y-a|\right)}{\left(1-|x|^2\right)\left(1-|y|^2\right)}}\right)\\
 &=\log\left(1+c\frac{|x-y|}{\sqrt{\left(1-|x|^2\right)\left(1-|y|^2\right)}}\sqrt{\left(1+\frac{|x-a|}{|a||x-a^*|}\right)\left(1+\frac{|y-a|}{|a||y-a^*|}\right)}\right)\\
 &\leq\log\left(1+c\frac{|x-y|}{\sqrt{\left(1-|x|^2\right)\left(1-|y|^2\right)}}\sqrt{\left(1+\frac{|x|+|a|}{1+|x||a|}\right)\left(1+\frac{|y|+|a|}{1+|y||a|}\right)}\right)\\
 &=\log\left(1+c\frac{|x-y|}{\sqrt{\left(1-|x|\right)\left(1-|y|\right)}}\sqrt{\frac{\left(1+|a|\right)^2}{\left(1+|x||a|\right)\left(1+|y||a|\right)}}\right)\\
 &\leq\log\left(1+c\frac{\left(1+|a|\right)|x-y|}{\sqrt{\left(1-|x|\right)\left(1-|y|\right)}}\right)\\
 &\leq\left(1+|a|\right)\,\log\left(1+c\frac{|x-y|}{\sqrt{\left(1-|x|\right)\left(1-|y|\right)}}\right)\\
 &=\left(1+|a|\right)\,h_{\mathbb{B}^n, c}\left( x, y \right)\,.
 \end{align*}
Therefore,
\begin{align*}
h_{\mathbb{B}^n, c}\left( f\left(x\right), f \left(y\right) \right)\leq\left(1+|a|\right)\,h_{\mathbb{B}^n, c}\left( x, y \right)\,.
\end{align*}

Next, we  prove the left-hand side of the inequalities.
Since $f\left(x\right)=\left(A\circ\sigma_a\right)\left(x\right)$ by Lemma \ref{le9}, $f^{-1}\left(x\right)=\left(A\circ\sigma_a\right)^{-1}\left(x\right)=\left(\sigma_a\circ A^T\right)\left(x\right)$.
Hence, for all $u, v \in \mathbb{B}^n$, we have
\begin{align*}
h_{\mathbb{B}^n, c}\left( f^{-1}\left(u\right), f^{-1}\left(v\right)\right)
&=h_{\mathbb{B}^n, c}\left( \sigma_a\left(A^T\left(u\right)\right), \sigma_a \left(A^T\left(v\right)\right) \right)\\
&\leq \left(1+|a|\right)h_{\mathbb{B}^n, c}\left(A^T\left(u\right), A^T\left(v\right) \right)\\
&=\left(1+|a|\right) h_{\mathbb{B}^n, c}\left(u, v\right).
\end{align*}
Now let $u=f\left(x\right)$ and $v=f\left(y\right)$, we have
\begin{align*}
h_{\mathbb{B}^n, c}\left( f\left(x\right), f\left(y\right) \right)\geq\frac{1}{1+|a|}h_{\mathbb{B}^n, c}\left( x, y \right)\,.
\end{align*}

For the sharpness of the constants, we set $x=t\frac{a}{|a|}=-y$ with  $t\in\left(0, |a|\right)$\,. Then
\begin{align*}
|\sigma_a\left(x\right)-\sigma_a \left(y\right)|=\frac{2t\left(1-|a|^2\right)}{1-|a|^2t^2}
\end{align*}
and
\begin{align*}
1-|\sigma_a\left(x\right)|=\frac{\left(1-|a|\right)\left(1+t\right)}{1-t|a|}\,, \quad
1-|\sigma_a\left(y\right)|=\frac{\left(1-|a|\right)\left(1-t\right)}{1+t|a|}\,.
\end{align*}
Hence,
\begin{align}\begin{split}\label{eq9}
\lim_{t\rightarrow 0^{+}}\frac{h_{\mathbb{B}^n, c}\left(f\left(x\right), f \left(y\right) \right)}{h_{\mathbb{B}^n, c}\left( x, y \right)}
&=\lim_{t\rightarrow 0^{+}}\frac{h_{\mathbb{B}^n, c}\left( \sigma_a\left(x\right), \sigma_a \left(y\right) \right)}{h_{\mathbb{B}^n, c}\left( x, y \right)}\\
&=\lim_{t\rightarrow 0^{+}}\frac{\log\left(1+c\frac{2t\left(1+|a|\right)}{\sqrt{\left(1-t^2\right)\left(1-|a|^2t^2\right)}}\right)\,}{\log\left(1+c\frac{2t}{1-t}\right)\,}\\
&=\lim_{t\rightarrow 0^{+}}\frac{\left(1+|a|\right)\left(1-t\right)}{\sqrt{\left(1-|a|^2t^2\right)\left(1-t^2\right)}}
=1+|a|\,,
\end{split}\end{align}
It follows from \eqref{eq9} and $\sigma_a\left(\sigma_a\left(x\right)\right)=x$ that
\begin{align*}
\lim_{t\rightarrow 0^{+}}\frac{h_{\mathbb{B}^n, c}\left(f\left(\sigma_a\left(x\right)\right), f \left(\sigma_a\left(y\right)\right) \right)}{h_{\mathbb{B}^n, c}\left( \sigma_a\left(x\right), \sigma_a\left(y\right) \right)}
&=\lim_{t\rightarrow 0^{+}}\frac{h_{\mathbb{B}^n, c}\left( \sigma_a\left(\sigma_a\left(x\right)\right), \sigma_a \left(\sigma_a\left(y\right)\right) \right)}
{h_{\mathbb{B}^n, c}\left( \sigma_a\left(x\right), \sigma_a\left(y\right) \right)}\\
&=\lim_{t\rightarrow 0^{+}}\frac{h_{\mathbb{B}^n, c}\left( x, y\right)}
{h_{\mathbb{B}^n, c}\left( \sigma_a\left(x\right), \sigma_a\left(y\right) \right)}
=\frac{1}{1+|a|}.
\end{align*}
This completes the proof.\hfill$\Box$

\subsection{Proof of Theorem \ref{thm:b2h}}

The right-hand side of the inequalities was proved in \cite[Lemma 2.11]{dhv}.

Now we prove the left-hand side of the inequalities.
Let $\phi_s:\mathbb{H}^n\rightarrow\mathbb{H}^n$ be a similarity with $\phi_s\left(e_n\right)=a=f\left(0\right)$,
and let $\sigma:\mathbb{B}^n\rightarrow\mathbb{H}^n$ be the inversion in the sphere $S^{n-1}\left(-e_n, \sqrt{2}\right)$ with $\sigma\left(0\right)=e_n$.
Then $f^{-1}\circ \phi_s \circ \sigma:\mathbb{B}^n\rightarrow\mathbb{B}^n$ is a M\"{o}bius transformation with $f^{-1}\left(\phi_s\left(\sigma\left(0\right)\right)\right)=f^{-1}\left(\phi_s\left(e_n\right)\right)=f^{-1}\left(a\right)=0$.
By Lemma \ref{yl18}, we see that $\psi_o\equiv f^{-1}\circ \phi_s \circ \sigma$ is an orthogonal transformation
and $f=\phi_s \circ \sigma \circ {\psi_o}^{-1}$.

Since $\sigma$ is an inversion in $S^{n-1}\left(-e_n, \sqrt{2}\right)$, we have
\begin{align*}
\sigma\left(x\right)=-e_n+\frac{2\left(x+e_n\right)}{|x+e_n|^2}.
\end{align*}
Let $\sigma\left(x\right)=\left(\sigma_1\left(x\right), \sigma_2\left(x\right),\cdots,\sigma_n\left(x\right)\right)$\,, then
\begin{align*}
\sigma_n\left(x\right)=\frac{1-|x|^2}{|x+e_n|^2} \,.
\end{align*}
It follows from \eqref{eq2} that
\begin{align*}
|\sigma\left(x\right)-\sigma\left(y\right)|=\frac{2|x-y|}{|x+e_n||y+e_n|}\,.
\end{align*}

By the invariance of $h_{\mathbb{H}^n, c}$ under similarities, we have for all $x, y \in \mathbb{B}^n$\,,
\begin{align*}
&h_{\mathbb{H}^n, c}\left(f\left(x\right), f\left(y\right) \right)\\
&=h_{\mathbb{H}^n, c}\left( \sigma \left( \psi_o^{-1}\left(x\right)\right),  \sigma \left( \psi_o^{-1}\left(y\right)\right) \right)\\
&=\log\left( 1+ c\frac{|\sigma \left( \psi_o^{-1}\left(x\right)\right)-\sigma\left(\psi_o^{-1}\left(y\right)\right)|}{\sqrt{\sigma_n\left(\psi_o^{-1}\left(x\right)\right)\sigma_n\left(\psi_o^{-1}\left(y\right)\right)}}\right)\\
&=\log\left( 1+ c\frac{2|\psi_o^{-1}\left(x\right)-\psi_o^{-1}\left(y\right)|}{\sqrt{\left(1+|\psi_o^{-1}\left(x\right)|\right)\left(1+|\psi_o^{-1}\left(y\right)|\right)}\sqrt{\left(1-|\psi_o^{-1}\left(x\right)|\right)\left(1-|\psi_o^{-1}\left(y\right)|\right)}}\right)\\
&=\log\left( 1+ c\frac{2|x-y|}{\sqrt{\left(1+|x|\right)\left(1+|y|\right)}\sqrt{\left(1-|x|\right)\left(1-|y|\right)}}\right)\\
&\geq\log\left( 1+ c\frac{|x-y|}{\sqrt{\left(1-|x|\right)\left(1-|y|\right)}}\right)\\
&=h_{\mathbb{B}^n, c}\left( x, y \right)\,.
\end{align*}

For the sharpness of the constants, set $x=-y=te_1$\ with $t\in\left(0,1\right)$. Then we have
\begin{align*}
h_{\mathbb{H}^n, c}\left(\sigma\left(x\right), \sigma\left(y\right) \right)
=\log\left( 1+ c\frac{4t}{1-t^2}\right)
\quad\hbox {and}\quad
h_{\mathbb{B}^n, c}\left( x, y \right)
=\log\left( 1+ c\frac{2t}{1-t}\right)\,.
\end{align*}
Hence,
\begin{align*}
&\lim_{t\rightarrow 0^{+}}\frac{h_{\mathbb{H}^n, c}\left(f\left(x\right), f \left(y\right) \right)}{h_{\mathbb{B}^n, c}\left( x, y \right)}
=\lim_{t\rightarrow 0^{+}}\frac{h_{\mathbb{H}^n, c}\left(\sigma\left(x\right), \sigma\left(y\right) \right)}{h_{\mathbb{B}^n, c}\left( x, y \right)}
=\lim_{t\rightarrow 0^{+}}\frac{2}{1+t}=2
\end{align*}
and
\begin{align*}
\lim_{t\rightarrow 1^{-}}\frac{h_{\mathbb{H}^n, c}\left(f\left(x\right), f \left(y\right) \right)}{h_{\mathbb{B}^n, c}\left( x, y \right)}
&=\lim_{t\rightarrow 1^{-}}\frac{h_{\mathbb{H}^n, c}\left(\sigma\left(x\right), \sigma \left(y\right) \right)}{h_{\mathbb{B}^n, c}\left( x, y \right)}\\
&=\lim_{t\rightarrow 1^{-}}\frac{\log\left(1-t^2\right)}{\log \left(1-t\right)}
=\lim_{t\rightarrow 1^{-}}\frac{2t}{1+t}=1.
\end{align*}
This completes the proof.\hfill$\Box$

\subsection{Proof of Theorem \ref{thm:h2h}}
We assume that $f\left(a\right)=\infty$ for some $a\in\partial \mathbb{H}^n$. If $a=\infty$, Lemma \ref{yl19} implies that $f$ is a similarity.
By the invariance of $h_{\mathbb{H}^n, c}$ under similarities, we have  for all $x,y\in \mathbb{H}^n$,
\begin{align*}
h_{\mathbb{H}^n, c}\left( f\left(x\right), f\left(y\right) \right)= h_{\mathbb{H}^n, c}\left( x, y \right).
\end{align*}

Now we assume $a\neq \infty$.
Let $\varphi_t:\overline{\mathbb{H}^n}\rightarrow\overline{\mathbb{H}^n}$ be a translation with $\varphi_t\left(x\right)=x-a$, then $\varphi_t\left(a\right)=0$.
Let $\sigma:\mathbb{H}^n\rightarrow\mathbb{H}^n$ be the inversion in the sphere $S^{n-1}\left(0, 1\right)$, then $\sigma\left(0\right)=\infty$ and $\sigma\left(\infty\right)=0$.
We see that $f\circ {\varphi_t} ^{-1}\circ \sigma^{-1}:\mathbb{H}^n\rightarrow\mathbb{H}^n$ is a  M\"{o}bius transformation with
$f\left( {\varphi_t} ^{-1}\left( \sigma^{-1}\left(\infty\right)\right)\right)=f\left({\varphi_t}^{-1}\left(0\right)\right)=f\left(a\right)=\infty $.
It follows from Lemma \ref{yl19} that $\phi_s\equiv f\circ {\varphi_t} ^{-1}\circ \sigma^{-1}$ is a similarity, and hence $f=\phi_s\circ\sigma\circ\varphi_t$.

Since $\sigma$ is the inversion in the sphere $S^{n-1}\left(0, 1\right)$, we have
\begin{align*}
\sigma\left(x\right)=\frac{x}{|x|^2} \quad \hbox{and} \quad \sigma_n\left(x\right)=\frac{x_n}{|x|^2}\,.
\end{align*}
By \eqref{eq2}, we have
\begin{align*}
|\sigma\left(x\right)-\sigma\left(y\right)|=\frac{|x-y|}{|x||y|}\,.
\end{align*}

Since $h_{\mathbb{H}^n, c}$ is unchanged under similarities, we have  for all $x,y\in \mathbb{H}^n$,
\begin{align*}
h_{\mathbb{H}^n, c}\left( f\left(x\right), f\left(y\right) \right)
&=h_{\mathbb{H}^n, c}\left( \sigma\left(\varphi_t\left(x\right)\right), \sigma\left(\varphi_t\left(y\right)\right) \right)\\
&=\log\left( 1+ c\frac{|\sigma\left(\varphi_t\left(x\right)\right)-\sigma\left(\varphi_t\left(y\right)\right)|}{\sqrt{\sigma_n\left(\varphi_t\left(x\right)\right)\sigma_n\left(\varphi_t\left(y\right)\right)}}\right)\\
&=\log\left( 1+ c\frac{|\varphi_t\left(x\right)-\varphi_t\left(y\right)|}{\sqrt{{\left(\varphi_t\right)}_n\left(x\right){\left(\varphi_t\right)}_n\left(y\right)}}\right)\\
&=\log\left( 1+ c\frac{|x-y|}{\sqrt{x_ny_n}}\right)\\
&= h_{\mathbb{H}^n, c}\left( x, y \right).
\end{align*}
This completes the proof.\hfill$\Box$


\subsection{Proof of Theorem \ref{thm:pb}}

Since $h_{D,c}$-metric is preserved under orthogonal transformations, it follows from Lemma \ref{le9} that, for $x,y,a\in\mathbb{B}^n$,
it suffices to prove the theorem for $f=\sigma_a$.

Without loss of generality, we assume that $|y|\leq|x|$. Let $D=\mathbb{B}^n\setminus\{0\}$ and  $D'=\mathbb{B}^n\setminus\{a\}$.
The we have the following cases for $h_{D,c}\left(x,y\right)$:\\
If $\frac{1}{2}\leq|y|\leq|x|$, then
\begin{equation*}
h_{D,c}\left(x,y\right)=\log\left(1+\frac{c|x-y|}{\sqrt{1-|x|}\sqrt{1-|y|}}\right);
\end{equation*}
If $|y|\leq\frac{1}{2}\leq|x|$, then
\begin{equation*}
h_{D,c}\left(x,y\right)=\log\left(1+\frac{c|x-y|}{\sqrt{1-|x|}\sqrt{|y|}}\right);
\end{equation*}
If $|y|\leq|x|\leq\frac{1}{2}$, then
\begin{equation*}
h_{D,c}\left(x,y\right)=\log\left(1+\frac{c|x-y|}{\sqrt{|x|}\sqrt{|y|}}\right).
\end{equation*}
We also have
\begin{equation*}
h_{D',c}\left(\sigma_a\left(x\right),\sigma_a\left(y\right)\right)=\log\left(1+\frac{c|\sigma_a\left(x\right)-\sigma_a\left(y\right)|}{T}\right),
\end{equation*}
where
\begin{equation*}
T=\sqrt{\min\{|\sigma_a\left(x\right)-a|,1-|\sigma_a\left(x\right)|}\}\sqrt{\min\{|\sigma_a\left(y\right)-a|,1-|\sigma_a\left(y\right)|}\}.
\end{equation*}

Now we prove the inequality according to the following cases.

\textbf{Case 1}: $T=\sqrt{|\sigma_a\left(x\right)-a||\sigma_a\left(y\right)-a|}$.

Since
\begin{equation*}
|\sigma_a\left(x\right)-a|=|\sigma_a\left(x\right)-\sigma_a\left(0\right)|=\frac{s^2|x|}{|x-a^*||a^*|},\quad s^2=|a|^{-2}-1,
\end{equation*}
it follows that
\begin{equation*}
|\sigma_a\left(y\right)-a|=\frac{s^2|y|}{|y-a^*||a^*|}
\end{equation*}
and
\begin{equation*}
T=\frac{s^2}{|a^*|}\frac{\sqrt{|x||y|}}{\sqrt{|x-a^*||y-a^*|}}.
\end{equation*}
Hence
\begin{align*}
h_{D',c}\left(\sigma_a\left(x\right),\sigma_a\left(y\right)\right)
=\log\left(1+\frac{c|x-y|}{|a|\sqrt{|x||y||x-a^*||y-a^*|}}\right).
\end{align*}
By Lemma \ref{le 11}, we have
\begin{align*}
h_{D',c}\left(\sigma_a\left(x\right),\sigma_a\left(y\right)\right)
\leq\log\left(1+\frac{c|x-y|}{\sqrt{|x||y|\left(1-|a||x|\right)\left(1-|a||y|\right)}}\right).
\end{align*}

\textbf{Subcase 1.1}: $\frac{1}{2}\leq|y|\leq|x|$. By Lemma \ref{le 12} and Bernoulli's inequality \eqref{bql}, we have
\begin{align*}
h_{D',c}\left(\sigma_a\left(x\right),\sigma_a\left(y\right)\right)&\leq\log\left(1+\frac{c|x-y|}{\sqrt{|x||y|\left(1-|a||x|\right)\left(1-|a||y|\right)}}\right)\\
&\leq\log\left(1+\frac{c|x-y|}{\sqrt{\left(1-\frac{|a|}{2}\right)^2\left(1-|x|\right)\left(1-|y|\right)}}\right)\\
&\leq\frac{1}{1-\frac{|a|}{2}}\log\left(1+\frac{c|x-y|}{\sqrt{\left(1-|x|\right)\left(1-|y|\right)}}\right)\\
&=\frac{2}{2-|a|}h_{D,c}\left(x,y\right).
\end{align*}

\textbf{Subcase 1.2}: $|y|\leq\frac{1}{2}\leq|x|$. In this case, we have
\begin{align*}
h_{D',c}\left(\sigma_a\left(x\right),\sigma_a\left(y\right)\right)&\leq\log\left(1+\frac{c|x-y|}{\sqrt{|x||y|\left(1-|a||x|\right)\left(1-|a||y|\right)}}\right)\\
&\leq\log\left(1+\frac{c|x-y|}{\sqrt{\left(1-\frac{|a|}{2}\right)\left(1-|x|\right)|y|\left(1-\frac{|a|}{2}\right)}}\right)\\
&\leq\frac{1}{1-\frac{|a|}{2}}\log\left(1+\frac{c|x-y|}{\sqrt{\left(1-|x|\right)|y|}}\right)\\
&=\frac{2}{2-|a|}h_{D,c}\left(x,y\right).
\end{align*}

\textbf{Subcase 1.3}: $|y|\leq|x|\leq\frac{1}{2}$. We see that
\begin{align*}
h_{D',c}\left(\sigma_a\left(x\right),\sigma_a\left(y\right)\right)&\leq\log\left(1+\frac{c|x-y|}{\sqrt{|x||y|\left(1-|a||x|\right)\left(1-|a||y|\right)}}\right)\\
&\leq\log\left(1+\frac{c|x-y|}{\sqrt{|x||y|\left(1-\frac{|a|}{2}\right)\left(1-\frac{|a|}{2}\right)}}\right)\\
&\leq\frac{1}{1-\frac{|a|}{2}}\log\left(1+\frac{c|x-y|}{\sqrt{|x||y|}}\right)\\
&=\frac{2}{2-|a|}h_{D,c}\left(x,y\right).
\end{align*}

Hence, for Case 1, we obtain
$$h_{D',c}\left(\sigma_a\left(x\right),\sigma_a\left(y\right)\right)\leq\frac{2}{2-|a|}h_{D,c}\left(x,y\right).$$

\textbf{Case 2}:  $T=\sqrt{|\sigma_a\left(x\right)-a|\left(1-|\sigma_a\left(y\right)|\right)}$.

Since
\begin{align*}
|\sigma_a\left(y\right)|=|\sigma_a\left(y\right)-\sigma_a\left(a\right)|=\frac{s^2|y-a|}{|y-a^*||a-a^*|},
\end{align*}
we get
\begin{align*}
&h_{D',c}\left(\sigma_a\left(x\right),\sigma_a\left(y\right)\right)\\
&=\log\left(1+\frac{c|\sigma_a\left(x\right)-\sigma_a\left(y\right)|}{\sqrt{|\sigma_a\left(x\right)-a|\left(1-|\sigma_a\left(y\right)|\right)}}\right)\\
&=\log\left(1+\frac{cs^2|x-y|}{\sqrt{|x-a^*||y-a^*|}\sqrt{|x|\left(|y-a^*||a-a^*|-s^2|y-a|\right)}}\right)\\
&=\log\left(1+\frac{cs^2|x-y|}{\sqrt{|x-a^*||y-a^*|}\sqrt{|x||a-a^*|}\sqrt{|y-a^*|-|a^*||y-a|}}\right)\\
&=\log\left(1+\frac{c\left(1-|a|^2\right)|x-y|}{|a|\sqrt{|x-a^*||y-a^*|}\sqrt{|x|\left(1-|a|^2\right)}\sqrt{|a||y-a^*|-|y-a|}}\right)\\
&=\log\left(1+\frac{c\left(1-|a|^2\right)|x-y|\sqrt{|a||y-a^*|+|y-a|}}{|a|\sqrt{|x-a^*||y-a^*|}\sqrt{|x|\left(1-|a|^2\right)}\sqrt{\left(1-|a|^2\right)\left(1-|y|^2\right)}}\right)\\
&=\log\left(1+\frac{c|x-y|}{\sqrt{1-|y|^2}}\frac{\sqrt{|a||y-a^*|+|y-a|}}{\sqrt{|a||y-a^*|}}\frac{1}{\sqrt{|x||a||x-a^*|}}\right)\\
&\leq\log\left(1+\frac{c|x-y|}{\sqrt{1-|y|^2}}\frac{\sqrt{|a||y-a^*|+|y-a|}}{\sqrt{|a||y-a^*|}}\frac{1}{\sqrt{\left(1-|a||x|\right)|x|}}\right)\\
&=\log\left(1+\frac{c|x-y|}{\sqrt{1-|y|^2}}\sqrt{1+\frac{|y-a|}{|a||y-a^*|}}\frac{1}{\sqrt{\left(1-|a||x|\right)|x|}}\right)\\
&\leq\log\left(1+\frac{c|x-y|}{\sqrt{1-|y|^2}}\sqrt{1+\frac{|a|+|y|}{1+|a||y|}}\frac{1}{\sqrt{\left(1-|a||x|\right)|x|}}\right)\\
&=\log\left(1+\frac{c|x-y|}{\sqrt{1-|y|}}\sqrt{\frac{1+|a|}{1+|a||y|}}\frac{1}{\sqrt{\left(1-|a||x|\right)|x|}}\right).
\end{align*}

\textbf{Subcase 2.1}: $\frac{1}{2}\leq|y|\leq|x|$.  In this case, we have
\begin{align*}
&h_{D',c}\left(\sigma_a\left(x\right),\sigma_a\left(y\right)\right)\\
&\leq\log\left(1+\frac{c|x-y|}{\sqrt{1-|y|}}\sqrt{\frac{1+|a|}{1+|a||y|}}\frac{1}{\sqrt{\left(1-|a||x|\right)|x|}}\right)\\
&\leq\log\left(1+\frac{c|x-y|}{\sqrt{1-|y|}}\sqrt{\frac{1+|a|}{1+|a||y|}}\frac{1}{\sqrt{\left(1-\frac{|a|}{2}\right)\left(1-|x|\right)}}\right)\\
&\leq\log\left(1+\frac{c|x-y|}{\sqrt{\left(1-|y|\right)\left(1-|x|\right)}}\sqrt{\frac{2\left(1+|a|\right)}{2+|a|}}\sqrt{\frac{2}{2-|a|}}\right)\\
&=\log\left(1+\frac{c|x-y|}{\sqrt{\left(1-|x|\right)\left(1-|y|\right)}}2\sqrt{\frac{1+|a|}{4-|a|^2}}\right)\\
&\leq2\sqrt{\frac{1+|a|}{4-|a|^2}}h_{D,c}\left(x,y\right).
\end{align*}

\textbf{Subcase 2.2}: $|y|\leq\frac{1}{2}\leq|x|$. Similarly,
\begin{align*}
&h_{D',c}\left(\sigma_a\left(x\right),\sigma_a\left(y\right)\right)\\
&\leq\log\left(1+\frac{c|x-y|}{\sqrt{1-|y|}}\sqrt{\frac{1+|a|}{1+|a||y|}}\frac{1}{\sqrt{\left(1-|a||x|\right)|x|}}\right)\\
&\leq\log\left(1+\frac{c|x-y|}{\sqrt{|y|}}\sqrt{\frac{1+|a|}{1+|a||y|}}\frac{1}{\sqrt{\left(1-\frac{|a|}{2}\right)\left(1-|x|\right)}}\right)\\
&=\log\left(1+\frac{c|x-y|}{\sqrt{\left(1-|x|\right)|y|}}\sqrt{\frac{1+|a|}{1+|a||y|}}\sqrt{\frac{2}{2-|a|}}\right)\\
&\leq\log\left(1+\frac{c|x-y|}{\sqrt{\left(1-|x|\right)|y|}}\sqrt{1+|a|}\sqrt{\frac{2}{2-|a|}}\right)\\
&\leq\sqrt{\frac{2\left(1+|a|\right)}{2-|a|}}h_{D,c}\left(x,y\right).
\end{align*}

\textbf{Subcase 2.3}: $|y|\leq|x|\leq\frac{1}{2}$. We see that
\begin{align*}
&h_{D',c}\left(\sigma_a\left(x\right),\sigma_a\left(y\right)\right)\\
&\leq\log\left(1+\frac{c|x-y|}{\sqrt{1-|y|}}\sqrt{\frac{1+|a|}{1+|a||y|}}\frac{1}{\sqrt{\left(1-|a||x|\right)|x|}}\right)\\
&\leq\log\left(1+\frac{c|x-y|}{\sqrt{|y|}}\sqrt{\frac{1+|a|}{1+|a||y|}}\frac{1}{\sqrt{\left(1-\frac{|a|}{2}\right)|x|}}\right)\\
&=\log\left(1+\frac{c|x-y|}{\sqrt{|x||y|}}\sqrt{\frac{1+|a|}{1+|a||y|}}\sqrt{\frac{2}{2-|a|}}\right)\\
&\leq\log\left(1+\frac{c|x-y|}{\sqrt{|x||y|}}\sqrt{1+|a|}\sqrt{\frac{2}{2-|a|}}\right)\\
&\leq\sqrt{\frac{2\left(1+|a|\right)}{2-|a|}}h_{D,c}\left(x,y\right).
\end{align*}

Hence, for Case 2, we obtain
$$h_{D',c}\left(\sigma_a\left(x\right),\sigma_a\left(y\right)\right)\leq\sqrt{\frac{2\left(1+|a|\right)}{2-|a|}}h_{D,c}\left(x,y\right).$$

\textbf{Case 3}: $T=\sqrt{\left(1-|\sigma_a\left(x\right)|\right)|\sigma_a\left(y\right)-a|}$.
Since
\begin{align*}
|\sigma_a\left(x\right)|=|\sigma_a\left(x\right)-\sigma_a\left(a\right)|=\frac{s^2|x-a|}{|x-a^*||a-a^*|},
\end{align*}
Similarly, as in Case 2, we have
\begin{align*}
h_{D',c}\left(\sigma_a\left(x\right),\sigma_a\left(y\right)\right)
\leq\log\left(1+\frac{c|x-y|}{\sqrt{1-|x|}}\sqrt{\frac{1+|a|}{1+|a||x|}}\frac{1}{\sqrt{\left(1-|a||y|\right)|y|}}\right)
\end{align*}

\textbf{Subcase 3.1}: $\frac{1}{2}\leq|y|\leq|x|$. Then
\begin{align*}
&h_{D',c}\left(\sigma_a\left(x\right),\sigma_a\left(y\right)\right)\\
&\leq\log\left(1+\frac{c|x-y|}{\sqrt{1-|x|}}\sqrt{\frac{1+|a|}{1+|a||x|}}\frac{1}{\sqrt{\left(1-|a||y|\right)|y|}}\right)\\
&\leq\log\left(1+\frac{c|x-y|}{\sqrt{1-|x|}}\sqrt{\frac{1+|a|}{1+|a||x|}}\frac{1}{\sqrt{\left(1-\frac{|a|}{2}\right)\left(1-|y|\right)}}\right)\\
&=\log\left(1+\frac{c|x-y|}{\sqrt{\left(1-|y|\right)\left(1-|x|\right)}}\sqrt{\frac{1+|a|}{1+|a||x|}}\sqrt{\frac{2}{2-|a|}}\right)\\
&\leq\log\left(1+\frac{c|x-y|}{\sqrt{\left(1-|y|\right)\left(1-|x|\right)}}\sqrt{\frac{2\left(1+|a|\right)}{2+|a|}}\sqrt{\frac{2}{2-|a|}}\right)\\
&\leq2\sqrt{\frac{1+|a|}{4-|a|^2}}\log\left(1+\frac{c|x-y|}{\sqrt{\left(1-|x|\right)\left(1-|y|\right)}}\right)\\
&=2\sqrt{\frac{1+|a|}{4-|a|^2}}h_{D,c}\left(x,y\right).
\end{align*}

\textbf{Subcase 3.2}: $|y|\leq\frac{1}{2}\leq|x|$. Similarly,
\begin{align*}
&h_{D',c}\left(\sigma_a\left(x\right),\sigma_a\left(y\right)\right)\\
&\leq\log\left(1+\frac{c|x-y|}{\sqrt{1-|x|}}\sqrt{\frac{1+|a|}{1+|a||x|}}\frac{1}{\sqrt{\left(1-|a||y|\right)|y|}}\right)\\
&\leq\log\left(1+\frac{c|x-y|}{\sqrt{1-|x|}}\sqrt{\frac{1+|a|}{1+|a||x|}}\frac{1}{\sqrt{\left(1-\frac{|a|}{2}\right)|y|}}\right)\\
&=\log\left(1+\frac{c|x-y|}{\sqrt{|y|\left(1-|x|\right)}}\sqrt{\frac{1+|a|}{1+|a||x|}}\sqrt{\frac{2}{2-|a|}}\right)\\
&\leq\log\left(1+\frac{c|x-y|}{\sqrt{|y|\left(1-|x|\right)}}\sqrt{\frac{2\left(1+|a|\right)}{2+|a|}}\sqrt{\frac{2}{2-|a|}}\right)\\
&\leq2\sqrt{\frac{1+|a|}{4-|a|^2}}\log\left(1+\frac{c|x-y|}{\sqrt{|y|\left(1-|x|\right)}}\right)\\
&=2\sqrt{\frac{1+|a|}{4-|a|^2}}h_{D,c}\left(x,y\right).
\end{align*}

\textbf{Subcase 3.3}: $|y|\leq|x|\leq\frac{1}{2}$. We see that
\begin{align*}
&h_{D',c}\left(\sigma_a\left(x\right),\sigma_a\left(y\right)\right)\\
&\leq\log\left(1+\frac{c|x-y|}{\sqrt{1-|x|}}\sqrt{\frac{1+|a|}{1+|a||x|}}\frac{1}{\sqrt{\left(1-|a||y|\right)|y|}}\right)\\
&\leq\log\left(1+\frac{c|x-y|}{\sqrt{|x|}}\sqrt{\frac{1+|a|}{1+|a||x|}}\frac{1}{\sqrt{\left(1-\frac{|a|}{2}\right)|y|}}\right)\\
&\leq\log\left(1+\frac{c|x-y|}{\sqrt{|x||y|}}\sqrt{\frac{1+|a|}{1+|a||x|}}{\sqrt{\frac{2}{2-|a|}}}\right)\\
&\leq\log\left(1+\frac{c|x-y|}{\sqrt{|x||y|}}\sqrt{1+|a|}{\sqrt{\frac{2}{2-|a|}}}\right)\\
&\leq\sqrt{\frac{2\left(1+|a|\right)}{2-|a|}}\log\left(1+\frac{c|x-y|}{\sqrt{|x||y|}}\right)\\
&=\sqrt{\frac{2\left(1+|a|\right)}{2-|a|}}h_{D,c}\left(x,y\right).
\end{align*}

Therefore, for Case 3, we have
$$h_{D',c}\left(\sigma_a\left(x\right),\sigma_a\left(y\right)\right)\leq\sqrt{\frac{2\left(1+|a|\right)}{2-|a|}}h_{D,c}\left(x,y\right).$$

\textbf{Case 4}:  $T=\sqrt{1-|\sigma_a\left(x\right)|}\sqrt{|1-|\sigma_a\left(y\right)|}$. Then
\begin{align*}
&h_{D',c}\left(\sigma_a\left(x\right),\sigma_a\left(y\right)\right)\\
&=\log\left(1+\frac{c|\sigma_a\left(x\right)-\sigma_a\left(y\right)|}{\sqrt{1-|\sigma_a\left(x\right)|}\sqrt{|1-|\sigma_a\left(y\right)|}}\right)\\
&=\log\left(1+\frac{c\left(1-|a|^2\right)|x-y|}{|a|^2\sqrt{|x-a^*||y-a^*|}}\frac{1}{\sqrt{|x-a^*|-|a^*||x-a|}\sqrt{|y-a^*|-|a^*||y-a|}}\right)\\
&=\log\left(1+\frac{c\left(1-|a|^2\right)|x-y|}{|a|\sqrt{|x-a^*||y-a^*|}}\frac{1}{\sqrt{|a||x-a^*|-|x-a|}\sqrt{|a||y-a^*|-|y-a|}}\right)\\
&=\log\left(1+\frac{c\left(1-|a|^2\right)|x-y|}{|a|\sqrt{|x-a^*||y-a^*|}}\frac{\sqrt{|a||x-a^*|+|x-a|}\sqrt{|a||y-a^*|+|y-a|}}{\sqrt{\left(1-|a|^2\right)\left(1-|x|^2\right)}\sqrt{\left(1-|a|^2\right)\left(1-|y|^2\right)}}\right)\\
&=\log\left(1+\frac{c|x-y|}{\sqrt{\left(1-|x|^2\right)\left(1-|y|^2\right)}}\sqrt{1+\frac{|x-a|}{|a||x-a^*|}}\sqrt{1+\frac{|y-a|}{|a||y-a^*|}}\right)\\
&\leq\log\left(1+\frac{c|x-y|}{\sqrt{\left(1-|x|^2\right)\left(1-|y|^2\right)}}\sqrt{1+\frac{|x|+|a|}{1+|a||x|}}\sqrt{1+\frac{|y|+|a|}{1+|a||y|}}\right)\\
&=\log\left(1+\frac{c|x-y|}{\sqrt{\left(1-|x|\right)\left(1-|y|\right)}}\frac{1+|a|}{\sqrt{\left(1+|a||x|\right)\left(1+|a||y|\right)}}\right).
\end{align*}

\textbf{Subcase 4.1}: $\frac{1}{2}\leq|y|\leq|x|$. We see that
\begin{align*}
&h_{D',c}\left(\sigma_a\left(x\right),\sigma_a\left(y\right)\right)\\
&\leq\log\left(1+\frac{c|x-y|}{\sqrt{\left(1-|x|\right)\left(1-|y|\right)}}\frac{1+|a|}{\sqrt{\left(1+|a||x|\right)\left(1+|a||y|\right)}}\right)\\
&\leq\log\left(1+\frac{c|x-y|}{\sqrt{\left(1-|x|\right)\left(1-|y|\right)}}\frac{2\left(1+|a|\right)}{2+|a|}\right)\\
&\leq\frac{2\left(1+|a|\right)}{2+|a|}h_{D,c}\left(x,y\right).
\end{align*}

\textbf{Subcase 4.2}:  $|y|\leq\frac{1}{2}\leq|x|$. Similarly,
\begin{align*}
&h_{D',c}\left(\sigma_a\left(x\right),\sigma_a\left(y\right)\right)\\
&\leq\log\left(1+\frac{c|x-y|}{\sqrt{\left(1-|x|\right)\left(1-|y|\right)}}\frac{1+|a|}{\sqrt{\left(1+|a||x|\right)\left(1+|a||y|\right)}}\right)\\
&\leq\log\left(1+\frac{c|x-y|}{\sqrt{\left(1-|x|\right)|y|}}\frac{1+|a|}{\sqrt{1+\frac{|a|}{2}}}\right)\\
&\leq\frac{\sqrt{2}\left(1+|a|\right)}{\sqrt{2+|a|}}h_{D,c}\left(x,y\right).
\end{align*}

\textbf{Subcase 4.3}: $|y|\leq|x|\leq\frac{1}{2}$. We have
\begin{align*}
&h_{D',c}\left(\sigma_a\left(x\right),\sigma_a\left(y\right)\right)\\
&\leq\log\left(1+\frac{c|x-y|}{\sqrt{\left(1-|x|\right)\left(1-|y|\right)}}\frac{1+|a|}{\sqrt{\left(1+|a||x|\right)\left(1+|a||y|\right)}}\right)\\
&\leq\log\left(1+\frac{c|x-y|}{\sqrt{|x||y|}}\left(1+|a|\right)\right)\\
&\leq\left(1+|a|\right)h_{D,c}\left(x,y\right).
\end{align*}
Therefore, for Case 4, we have
$$h_{D',c}\left(\sigma_a\left(x\right),\sigma_a\left(y\right)\right)\leq\left(1+|a|\right)h_{D,c}\left(x,y\right).$$

Since $\max\left\{\frac{2}{2-|a|},\sqrt{\frac{2\left(1+|a|\right)}{2-|a|}},1+|a|\right\}=1+|a|$, we finally obtain
$$h_{D',c}\left(\sigma_a\left(x\right),\sigma_a\left(y\right)\right)\leq\left(1+|a|\right)h_{D,c}\left(x,y\right).$$

By Lemma \ref{le9}, $f^{-1}\left(x\right)=\left(A\circ\sigma_a\right)\left(x\right)$ and $f\left(x\right)=\left(\sigma_a\circ A^T\right)\left(x\right)$.
Then
\begin{align*}
h_{D',c}\left(f\left(x\right), f\left(y\right)\right)=&h_{D',c}\left(\left(\sigma_a\circ A^T\right)\left(x\right),\left(\sigma_a\circ A^T\right)\left(y\right)\right)\\
\leq&\left(1+|a|\right)h_{D,c}\left(A^T\left(x\right),A^T\left(y\right)\right)\\
=&\left(1+|a|\right)h_{D,c}\left(x,y\right).
\end{align*}
This completes the proof.\hfill$\Box$


{\small
}

{\small
{\em Authors' addresses}:
{\em Yinping Wu}, {\em Gendi Wang(Corresponding author)}, {\em Gaili Jia}, {\em Xiaohui Zhang},
  Department of Mathematical Sciences,
  Zhejiang Sci-Tech University, 310018 Hangzhou, China.
 e-mail: \texttt{yinping\_wu95@\allowbreak 163.com, gendi.wang@\allowbreak zstu.edu.cn, gaili\_jia@\allowbreak 163.com, xiaohui.zhang@\allowbreak zstu.edu.cn}.
}

\end{document}